\newcommand{\JW}[1]{f_{1}}
\newcommand{\coeff}[2]{\coefficient_{\in \JW{1}^{2}}}
\DeclareMathOperator{\coefficient}{coeff}
\newcommand{\db}[1]{\left(\left(1\right)\right)}
\newtheorem{theorem}{Theorem}[section]
\newtheorem{lemma}[theorem]{Lemma}
\newtheorem{remark}[theorem]{Remark}
\newtheorem{corollary}[theorem]{Corollary}
\newtheorem{proposition}[theorem]{Proposition}
\newtheorem{definition}[theorem]{Definition}
\newtheorem{example}[theorem]{Example}
\newtheorem{problem}[theorem]{Problem}
\begin{document}
\title
	{Matrix Quasi-tree Theorem}
	
	\author{Qingying Deng\\
		{\small School of Mathematics and Computational Science, Xiangtan University, P. R. China}\\
		Xian'an Jin\\
		{\small School of Mathematical Sciences, Xiamen University, P. R. China}\\
		Qi Yan\footnote{Corresponding author.}\\
		{\small School of Mathematics and Statistics, Lanzhou University, P. R. China}\\
        Yexiang Yan\\
		{\small School of Mathematics and Computational Science, Xiangtan University, P. R. China}\\
		{\small{Email: qingying@xtu.edu.cn, xajin@xmu.edu.cn,yanq@lzu.edu.cn, linmu394@163.com}}
	}
	\date{}
	
	\maketitle
\begin{abstract}
Building on prior work that established Matrix Quasi-tree Theorems for special embedded graphs, in this paper, we develop a comprehensive theory applicable to all embedded graphs. We introduce symbolic skew-adjacency matrices and reduction maps as key innovations, and prove that a specific polynomial derived from these matrices encodes all spanning quasi-trees of a bouquet. This result provides a complete analogue of the Matrix Tree Theorem for topological graph theory, with applications to quasi-tree enumeration in both orientable and non-orientable embedded graphs.
\end{abstract}

\section{Introduction}
The classical Matrix Tree Theorem, established by Kirchhoff \cite{1847Kirchhoff} and further developed by Tutte \cite{MTT}, provides an elegant combinatorial interpretation of determinants for enumerating spanning trees in graphs.

Let $G$ be a finite loopless multigraph with vertex set $V(G)$ and edge set $E(G)$. For an edge $e \in E(G)$, we denote its set of endpoints by $v(e) \subseteq V(G)$. We associate a variable $x_e$ to each edge $e$, and for any subset $F \subseteq E(G)$, we define the monomial
\[
x_F = \prod_{e \in F} x_e.
\]
The \emph{Kirchhoff matrix} (or Laplacian matrix) $\mathbf{K} = (\ell_{ij})$ is defined as a symmetric matrix indexed by the vertices of $G$, with entries given by
$$\ell_{ij}=-\sum_{\substack{e\in E(G) \\ v(e)=\{i, j\}}}x_e, \text{\ \ if\
} i\ne j,   \text{\ \ \ and\ \ \ }
 \ell_{ii}=\sum_{\substack{e\in E(G) \\ i\in v(e)}}x_e.$$
Since each row of $\mathbf{K}$ sums to zero, its determinant vanishes. However, the determinant of the submatrix $\mathbf{K}^{(p)}$ obtained by deleting the $p$th row and column is independent of the choice of $p$. This determinant defines the \emph{Kirchhoff polynomial} $\mathcal{K}_G$ in the variables $x_e$.

The Matrix Tree Theorem \cite[Theorem VI.29]{MTT} states that the Kirchhoff polynomial expands as
\begin{equation*}
\mathcal{K}_G = \det \mathbf{K}^{(p)} = \sum_T x_{E(T)},
\end{equation*}
where the sum is taken over all spanning trees of $G$. Each monomial $x_{E(T)}$ appears with coefficient 1 and corresponds to a spanning tree of the graph. For further details, we refer to \cite{Masbaum}.

Recent developments in graph polynomials \cite{2018Butler, 2011Champanerkar, 2011Vignes} and delta-matroid \cite{ChunJCTA, Chun} reveal a fundamental correspondence: spanning quasi-trees in topological graph theory are the natural analogue of spanning trees in abstract graph theory.

Merino et al.~\cite{Merino2023} established an analogue of Kirchhoff's Matrix Tree Theorem for orientable embedded graphs, replacing the count of spanning trees with that of spanning quasi-trees. This result can alternatively be derived from the work of Macris and Pule \cite[Theorem 1]{Macris} on Eulerian circuit enumeration (see also \cite{1997Lauri, 1999Lauri}).

\begin{theorem}[\cite{Bouchet87,1997Lauri,Macris, Merino2023}]\label{theorem1}
Let $B$ be an orientable bouquet with edge set $[n]= \{1, 2, \dots, n\}$ and signed rotation $\sigma$, and let $\mathbf{A_{\sigma}^u}$ be its unsymbolic skew-adjacency matrix. Then the number of spanning quasi-trees of $B$ is
\[
\tau(B) = \det(I_n + \mathbf{A_{\sigma}^u}).
\]
\end{theorem}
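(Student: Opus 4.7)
The plan is to expand the determinant via the principal-minor identity
\[
\det(I_n + \mathbf{A_{\sigma}^u}) = \sum_{S \subseteq [n]} \det\bigl(\mathbf{A_{\sigma}^u}[S]\bigr),
\]
where $\mathbf{A_{\sigma}^u}[S]$ denotes the principal submatrix on rows and columns indexed by $S$. The key structural observation is that $\mathbf{A_{\sigma}^u}[S]$ coincides with the unsymbolic skew-adjacency matrix of the sub-bouquet $B|_S$ obtained by restricting $\sigma$ to the loops in $S$. Thus the theorem reduces to showing that each principal minor equals $1$ when $S$ is a spanning quasi-tree of $B$ and $0$ otherwise.

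Since $\mathbf{A_{\sigma}^u}[S]$ is skew-symmetric, the identity $\det(M) = \mathrm{Pf}(M)^2$ applies and automatically kills the contribution of every odd-cardinality $S$ --- consistent with the fact that quasi-trees of an orientable bouquet necessarily have even size. The task then reduces to the Pfaffian key lemma: $\mathrm{Pf}(\mathbf{A_{\sigma}^u}[S]) \in \{+1,-1\}$ if $S$ is a quasi-tree, and $0$ otherwise. Expanding the Pfaffian as a signed sum over perfect matchings of $S$, only matchings whose pairs correspond to interlaced loops contribute, since non-interlaced entries of $\mathbf{A_{\sigma}^u}$ vanish; the calculation is thus purely combinatorial on the chord diagram associated to $\sigma$.

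To prove the key lemma I would follow Bouchet's structural approach via orientable isotropic systems (equivalently, the binary delta-matroid of the ribbon graph): a set $S$ of loops forms a quasi-tree precisely when the interlacement matrix on $S$ is non-singular over $\mathbb{Q}$. For non-quasi-trees, any non-trivial kernel vector can be used to build a sign-reversing involution on the contributing matchings; for quasi-trees, the orientability of $\sigma$ forces all contributing matchings to agree in sign, yielding $\mathrm{Pf}=\pm 1$. A self-contained alternative is induction on $n$ via a partial dual at a single loop: this local operation flips one row and column of $\mathbf{A_{\sigma}^u}$ in a prescribed way and correspondingly modifies the set of spanning quasi-trees, reducing the claim to a smaller bouquet with trivial base case $n=0$.

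The main obstacle I anticipate is the sign argument in the key lemma --- establishing that all surviving perfect matchings in the Pfaffian expansion agree in sign when $S$ is a quasi-tree. This is the delicate topological-combinatorial core of the result and explains why it was historically derived indirectly, either through the Eulerian-circuit enumeration of Macris--Pule or through Bouchet's isotropic-system framework, rather than by a direct matching count. A partial-dual induction seems the most tractable route, since each inductive step requires only verifying a local compatibility between the Pfaffian and a single chord-diagram move, avoiding a global bijective accounting of signs.
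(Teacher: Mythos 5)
Your reduction is fine as far as it goes: the principal-minor expansion of $\det(I_n+\mathbf{A_{\sigma}^u})$, the observation that $\mathbf{A_{\sigma}^u}[S]$ is the skew-adjacency matrix of $B|_S$, and the parity remark via $\det=\mathrm{Pf}^2$ are all correct, and this is exactly the frame the paper itself uses (it states this expansion in Remark 3.4 and feeds the per-minor statement in as Lemma 4.1). But note that the paper does not prove Theorem 1.1 at all --- it quotes it from Bouchet, Lauri, Macris--Pul\'e and Merino--Moffatt--Noble --- and everything you have genuinely argued is the easy bookkeeping around the cited content. The entire substance is your ``Pfaffian key lemma,'' and the sketch you give for it contains a false step: it is \emph{not} true that for a quasi-tree ``all contributing matchings agree in sign.'' Take the orientable bouquet with signed rotation $[1^a,2^a,3^a,4^a,1^b,2^b,3^b,4^b]$: every pair of loops is interlaced in the pattern $i^a,j^a,i^b,j^b$, so all entries above the diagonal are $+1$, and $\mathrm{Pf}=a_{12}a_{34}-a_{13}a_{24}+a_{14}a_{23}=1-1+1=1$. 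Here $\{1,2,3,4\}$ is a quasi-tree (this is the standard one-face genus-$2$ map) and the Pfaffian is $1$, yet the three contributing matchings have mixed signs. The correct statement is Bouchet's principal unimodularity of the canonically signed interlacement matrix, which allows cancellation among matchings; proving that is the real work, and a sign-uniformity argument cannot deliver it. Relatedly, your characterization ``$S$ is a quasi-tree iff the interlacement matrix on $S$ is non-singular over $\mathbb{Q}$'' presupposes unimodularity: what is available a priori from the delta-matroid/isotropic-system side is non-singularity over $\mathbb{F}_2$, which only controls the determinant mod $2$, not its integer value.

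Your fallback, induction via a partial dual at a single loop, is also not yet a proof. Pivoting the matrix on a single element requires a nonzero diagonal entry, i.e.\ a non-orientable loop --- this is precisely how the paper's own induction (Theorem 4.4, via Lemma 4.3) proceeds in the non-orientable case --- but an orientable bouquet has no such loop, so the local move must instead be a pivot on an interlaced \emph{pair} of loops (partial duality on two edges), and one must then track the exact integer determinant, not just its parity, through that operation together with the corresponding change in the family of quasi-trees. That sign-and-value bookkeeping is exactly where the cited proofs (principally unimodular matrices in Merino et al., Euler-trail counts in Macris--Pul\'e, isotropic systems in Bouchet) invest their effort. So as it stands the proposal correctly reduces Theorem 1.1 to the known key lemma but does not prove it: one proposed route contains a demonstrably false claim, and the other is a gesture at an induction whose inductive step is the hard part.
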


This result was subsequently extended by Deng et al. \cite{Deng24} to bouquets containing exactly one non-orientable loop using delta-matroid theory. This progression naturally leads to the following problem:
\begin{problem}
Establish a Matrix Quasi-tree Theorem for all embedded graphs.
\end{problem}

In this paper, we introduce the novel concepts of symbolic skew-adjacency matrices and reduction maps $f$ (defined in Section 3), and use them to establish a comprehensive Matrix Quasi-tree Theorem for all embedded graphs.

Throughout this paper, for any polynomial $P \in \mathbb{Z}[x_A : A \subseteq [n]]$, we define $P \bmod 2$ to be the polynomial obtained by reducing each coefficient modulo 2.

\begin{theorem}[Matrix Quasi-tree Theorem]\label{thm:main}

Let $B$ be a bouquet with edge set $[n]$ and signed rotation $\sigma$, and let $\mathbf{A_{\sigma}^s}$ be its symbolic skew-adjacency matrix. Then
\begin{eqnarray*}
f(\det(I_n + \mathbf{A_{\sigma}^s})) \bmod 2= \sum_X x_X,
\end{eqnarray*}
where the sum is taken over all the edge sets of spanning quasi-trees in $B$.
Consequently,
\[
\tau(B) = f(\det(I_n + \mathbf{A_{\sigma}^s})) \bmod 2 \big|_{x_X = 1}.
\]
\end{theorem}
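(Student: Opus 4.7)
The plan is to prove the identity by expanding $\det(I_n + \mathbf{A_{\sigma}^s})$ into a sum of principal minors of $\mathbf{A_{\sigma}^s}$ indexed by subsets $S \subseteq [n]$, and then showing that, after the reduction map $f$ and reduction modulo $2$, each spanning quasi-tree $X$ contributes the monomial $x_X$ with coefficient $1$, while every non-quasi-tree subset contributes $0$. Once this combinatorial identification is established, the second statement follows immediately by specializing $x_X = 1$.

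First, I would invoke the standard identity
\[
\det(I_n + \mathbf{A_{\sigma}^s}) = \sum_{S \subseteq [n]} \det(\mathbf{A_{\sigma}^s}[S]),
\]
where $\mathbf{A_{\sigma}^s}[S]$ is the principal submatrix indexed by $S$. This reindexes the expansion by subsets of edges of the bouquet, which is precisely the set of candidates for edge sets of spanning quasi-trees, so the problem is reduced to analyzing a single principal minor for each $S$. Since $f$ is linear (being a map on coefficients/monomials as introduced in Section~3), it suffices to prove that
\[
f\bigl(\det(\mathbf{A_{\sigma}^s}[S])\bigr) \bmod 2 = \begin{cases} x_S & \text{if } S \text{ is a spanning quasi-tree of } B,\\ 0 & \text{otherwise}. \end{cases}
\]

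Second, for each fixed $S$, I would expand $\det(\mathbf{A_{\sigma}^s}[S])$ by the Leibniz formula as a signed sum over permutations $\pi$ of $S$. Each permutation decomposes into disjoint cycles, and by the construction of $\mathbf{A_{\sigma}^s}$ each cycle contributes a monomial in the $x_e$ together with a sign and, for cycles traversing non-orientable loops, additional symbolic decorations that $f$ is designed to normalize. The cycle data is to be matched, via the signed rotation $\sigma$, with the topological structure of the chord subdiagram induced on $S$: pairs of cycles related by orientation reversal should combine as in the orientable case of Theorem \ref{theorem1}, recovering a Pfaffian-like structure, while cycles arising from non-orientable chords produce the sign ambiguities that motivate passing to $\mathbb{F}_2$. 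Building on the orientable base case and the one-non-orientable-loop case of Deng et al.~\cite{Deng24}, I would then argue that the polynomial obtained after $f$ detects, at the level of parity, whether the partial dual of $B$ with respect to $S$ has a single vertex.

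Third, I would perform the mod-$2$ reduction and verify the dichotomy above. The parity collapse is essential because, in the non-orientable setting, the signed contributions of cycles cannot be canonically resolved over $\mathbb{Z}$: working in $\mathbb{F}_2$ quotients out precisely this ambiguity. The main obstacle, and the technical heart of the proof, will be the non-orientable case in Step~2: showing that for every non-quasi-tree $S$ the contributions to $f(\det(\mathbf{A_{\sigma}^s}[S]))$ pair off into an even count, while for every quasi-tree $S$ an odd number survive. I expect this to require a fixed-point-free involution on the set of non-matching permutations of $S$ — built from a combination of cycle-reversal involutions (which handle the orientable chords) and an involution that swaps equivalent symbolic encodings of non-orientable cycles introduced by $f$ — together with a parity computation on the interlacement/transition structure of the chord diagram that characterizes quasi-trees in the sense of \cite{ChunJCTA,Chun}.
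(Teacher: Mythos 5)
Your first step coincides with the paper's: expand $\det(I_n + \mathbf{A_{\sigma}^s})$ over principal submatrices, use linearity of $f$, and reduce the theorem to the dichotomy $f(\det(\mathbf{A_{\sigma}^s}[S])) \bmod 2 = x_S$ or $0$ according to whether $S$ is a quasi-tree. But from that point on your proposal has a genuine gap: the dichotomy itself, which is the entire content of the theorem when $B$ has two or more non-orientable loops, is never proved. You state that you ``expect'' a fixed-point-free involution on non-matching permutations together with a parity computation on the interlacement structure to do the job, but no such involution is constructed, and it is exactly here that the known inputs run out: the results of Merino et al.\ and Deng et al.\ (Lemma~\ref{orient} in the paper) only give $\det(\mathbf{A_{\sigma}^u}[X]) \in \{0,1\}$ when $B$ has at most one non-orientable loop, so ``building on'' those cases does not extend to the general situation, where the integer determinant can exceed $1$ and only its parity carries the information. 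A plan that names the hard step without supplying the mechanism for it is not a proof.

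Two concrete observations would have closed the distance to the paper's argument. First, the role of $f$ is pure bookkeeping: in each Leibniz term of $\det(\mathbf{A_{\sigma}^s}[S])$ every index of $S$ occurs, so $f(\det(\mathbf{A_{\sigma}^s}[S])) = \det(\mathbf{A_{\sigma}^u}[S])\, x_S$; there are no ``symbolic decorations to normalize,'' and the problem becomes purely numerical: show $\det(\mathbf{A_{\sigma}^u}[S]) \equiv 1 \pmod 2$ exactly when $S$ is a quasi-tree. Second, since $\mathbf{A_{\sigma}^u} \equiv \mathbf{M} \pmod 2$ where $\mathbf{M}$ is the adjacency matrix of the signed intersection graph, the paper proves this parity statement by induction on $|S|$: if $B|_S$ is orientable, Lemma~\ref{orient} applies; otherwise pick a non-orientable loop $e_1 \in S$, pivot on it (a Schur complement with pivot block $[1]$), and use Lemma~\ref{lemma:matrixpivot} to identify the complement over $\mathbb{F}_2$ with the adjacency matrix of $B|_S/e_1$, while Table~\ref{Fig3} gives that $S$ is feasible in $B|_S$ iff $S \setminus \{e_1\}$ is feasible in $B|_S/e_1$. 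This pivot-equals-partial-duality induction is the missing engine; your proposed route via a bespoke involution on permutations and chord-diagram parity might conceivably be made to work, but as written it is a restatement of the difficulty rather than a solution to it.
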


An example demonstrating Theorem \ref{thm:main} is given as follows.

\begin{figure}[!htbp]
\begin{center}
\includegraphics[width=9cm]{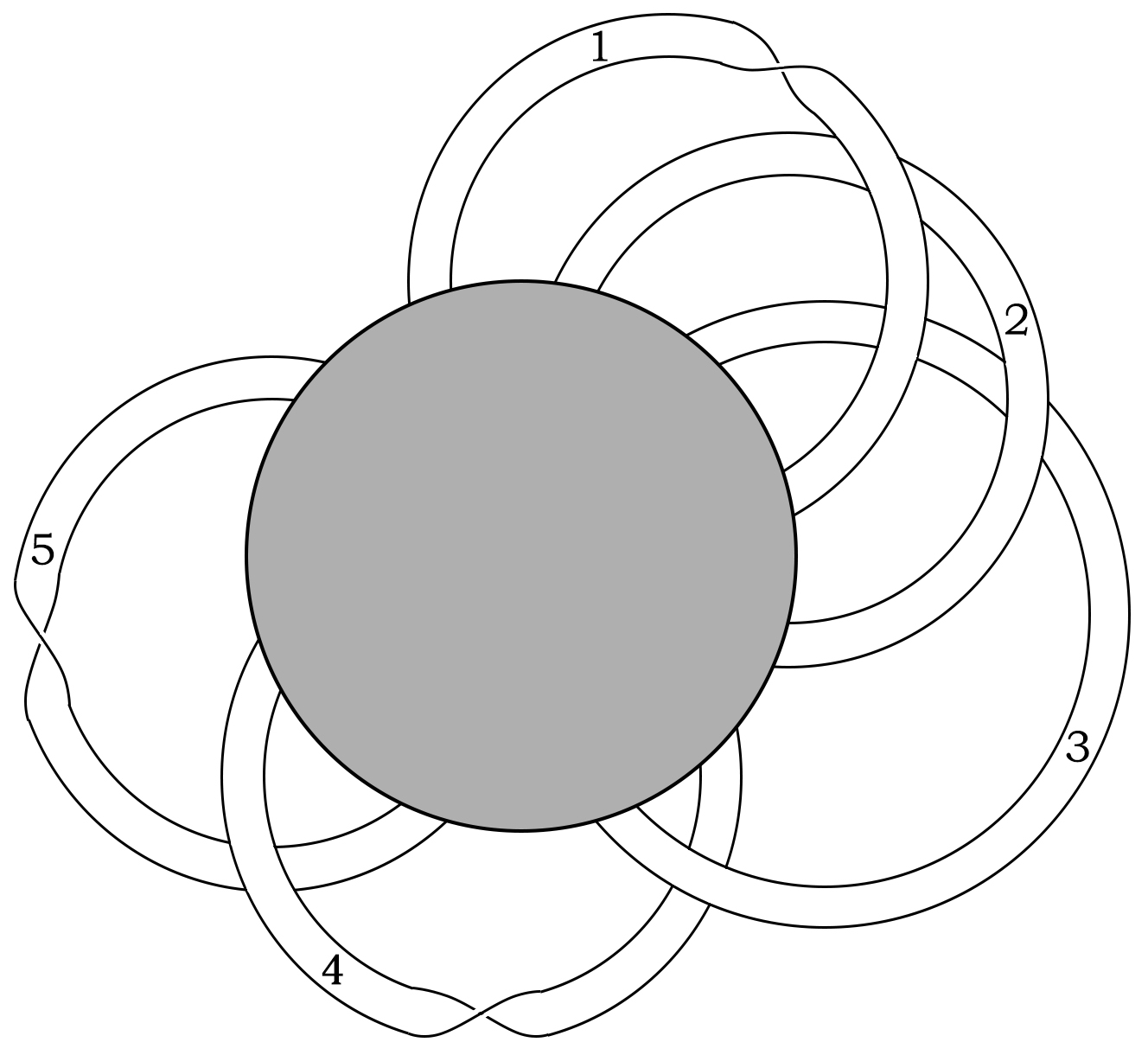}
\caption{A signed rotation of the bouquet is $[-1^a, 2^a, 3^a, 1^b, 2^b, -4^a,3^b,-5^a,4^b,5^b]$.}
\label{Fig02}
\end{center}
\end{figure}
\begin{example}
Let $B$ be a bouquet with signed rotation $\sigma$ as depicted in Figure \ref{Fig02}:
\[\sigma=
[-1^a, 2^a, 3^a, 1^b, 2^b, -4^a,3^b,-5^a,4^b,5^b].
\]
 Its symbolic skew-adjacency matrix is
\[
\mathbf{A_{\sigma}^s} = \begin{bmatrix}
x_{11} & x_{12} & x_{13} & 0 &0\\
-x_{12} & 0 & x_{23}  & 0 &0\\
-x_{13} & -x_{23} & 0 & x_{34} &0\\
0 & 0 & -x_{34} & x_{44} & x_{45}\\
0 & 0 &0  & -x_{45} &x_{55}
\end{bmatrix}.
\]
Then
\begin{align*}
\det(I_5 + \mathbf{A_{\sigma}^s})&=x_{11} x_{23}^{2} x_{44} x_{55} + x_{11} x_{23}^{2} x_{44} + x_{11} x_{23}^{2} x_{45}^{2} + x_{11}x_{23}^{2}x_{55}+ x_{11} x_{23}^{2}+x_{11}x_{34}^{2} x_{55}\\
&+ x_{11} x_{34}^{2} + x_{11} x_{44} x_{55} + x_{11} x_{44}+ x_{11} x_{45}^{2}+ x_{11} x_{55}+ x_{11} + x_{12}^{2} x_{34}^{2} x_{55}\\
& + x_{12}^{2} x_{34}^{2} + x_{12}^{2} x_{44} x_{55}+x_{12}^{2} x_{44}+ x_{12}^{2} x_{45}^{2}+x_{12}^{2} x_{55} + x_{12}^{2}+ x_{13}^{2} x_{44} x_{55}\\
&+ x_{13}^{2} x_{44} + x_{13}^{2} x_{45}^{2} + x_{13}^{2} x_{55}+ x_{13}^{2}+ x_{23}^{2} x_{44} x_{55} + x_{23}^{2} x_{44} + x_{23}^{2} x_{45}^{2}  \\
&+ x_{23}^{2} x_{55} + x_{23}^{2} + x_{34}^{2} x_{55} + x_{34}^{2} + x_{44} x_{55} + x_{44} + x_{45}^{2} + x_{55} + 1.
\end{align*}
Therefore,
\begin{align*}
f(\det(I_5 + \mathbf{A_{\sigma}^s})) &=x_{12345}+x_{1234}+x_{12345}+x_{1235} + x_{123}+x_{1345}\\
&\quad+ x_{134} + x_{145} + x_{14}+ x_{145}+ x_{15}+ x_{1} + x_{12345}\\
&\quad + x_{1234} + x_{1245}+x_{124}+ x_{1245}+x_{125} + x_{12}+x_{1345}\\
&\quad+ x_{134} + x_{1345} + x_{135}+ x_{13}+ x_{2345} + x_{234} + x_{2345}  \\
&\quad+ x_{235} + x_{23} + x_{345} + x_{34} + x_{45} + x_{4} + x_{45} + x_{5} + 1\\
&=1 + x_1 + x_{12} + x_{123} + 2x_{1234} + 3x_{12345} + x_{1235} \\
&\quad + x_{124} + 2x_{1245} + x_{125} + x_{13} + 2x_{134} + 3x_{1345} + x_{135} \\
&\quad + x_{14} + 2x_{145} + x_{15} + x_{23} + x_{234} + 2x_{2345} + x_{235} \\
&\quad + x_{34} + x_{345} + x_4 + 2x_{45} + x_5.
\end{align*}
Reducing modulo $2$, we obtain
\begin{align*}
f(\det(I_5 + \mathbf{A_{\sigma}^s})) \bmod 2 &= 1 + x_1 + x_{12} + x_{123} + x_{12345} + x_{1235} + x_{124} + x_{125} \\
&\quad + x_{13} + x_{1345} + x_{135} + x_{14} + x_{15} + x_{23} + x_{234} \\
&\quad + x_{235} + x_{34} + x_{345} + x_4 + x_5.
\end{align*}
Setting all $x_{X} = 1$ for $X\subseteq [n]$ recovers $\tau(B)=20$ and the set of spanning quasi-trees of $B$ is
\begin{align*}
\{&\emptyset, \{1\}, \{12\}, \{123\}, \{12345\}, \{1235\}, \{124\}, \{125\}, \{13\}, \{1345\}, \\
&\{135\}, \{14\}, \{15\}, \{23\}, \{234\}, \{235\}, \{34\}, \{345\}, \{4\}, \{5\}\}.
\end{align*}
\end{example}

This example clearly demonstrates that the reduction modulo 2 in Matrix Quasi-tree Theorem is essential. As seen in the computation, the polynomial $f(\det(I_5 + \mathbf{A_{\sigma}^s}))$ contains coefficients greater than 1 (specifically, coefficients 2 and 3 appear in various terms). The modulo 2 operation ensures that each spanning quasi-tree contributes exactly once to the count, eliminating the overcounting that would otherwise occur due to these higher coefficients. This phenomenon underscores the necessity of the $\bmod~2$ reduction in the main theorem statement.

A bouquet is a ribbon graph with a single vertex. Crucially, any connected ribbon graph $G$ can be transformed into a bouquet by applying partial duality with respect to the edge set of any spanning quasi-tree \cite{EM}. Since partial duality preserves the number of spanning quasi-trees \cite{Chun}, we obtain $\tau(G) = \tau(B)$ for the resulting bouquet $B$. This transformation allows us to extend our Matrix Quasi-tree Theorem to general connected ribbon graphs, as detailed in Corollary~\ref{Cor1}.

\section{Preliminaries}
There are different ways to describe an embedded graph. In this paper, we will use the concept of a ribbon graph.

\begin{definition}[\cite{bollobas}]
A {\it ribbon graph} $G=(V(G), E(G))$ is a $($orientable or non-orientable$)$ surface with boundary,
represented as the union of two sets of topological discs, a set $V(G)$ of vertices, and a set $E(G)$ of edges,
subject to the following restrictions.
\begin{description}
\item[(1)] The vertices and edges intersect in disjoint line segments;
\item[(2)] Each such line segment lies on the boundary of precisely one vertex and precisely one edge;
\item[(3)] Every edge contains exactly two such line segments.
\end{description}
\end{definition}

A ribbon graph \( H = (V(H), E(H)) \) is a \emph{ribbon subgraph} of \( G = (V(G), E(G)) \) if \( H \) can be obtained from \( G \) by deleting vertices and edges. If \( V(H) = V(G) \), then \( H \) is a \emph{spanning ribbon subgraph} of \( G \).
For a subset \( A \subseteq E(G) \), the ribbon subgraph \emph{induced by \( A \)}, denoted \( G|_A \), is obtained from \( G \) by deleting all edges not in \( A \), and then deleting any isolated vertices.
A ribbon graph is \emph{non-orientable} if it contains a subgraph homeomorphic to a M\"{o}bius band; otherwise, it is \emph{orientable}.
An edge \( e \) is a \emph{loop} if it is incident to only one vertex. A loop is \emph{non-orientable} if its corresponding ribbon, together with the vertex, forms a M\"{o}bius band; otherwise, it is \emph{orientable}.

Partial duality, introduced by Chmutov~\cite{Chmutov}, provides a powerful tool for studying signed Bollob\'{a}s-Riordan polynomials and knot polynomials. For a ribbon graph $G$ and $A\subseteq E(G)$,  the \emph{partial dual}, $G^{\delta(A)}$,  of $G$ with respect to $A$ is the ribbon graph obtained from $G$ by gluing a disc to $G$ along each boundary component of the spanning ribbon subgraph $(V (G), A)$ (such discs will be the vertex-discs of $G^{\delta(A)}$), removing the interiors of all vertex-discs of $G$ and keeping the edge-ribbons unchanged.

For an edge $e \in E(G)$, the \emph{contraction} of $e$ is defined as:
$$G/ e:=G^{\delta(e)}\backslash e.$$ Table~\ref{Fig3} from~\cite{EM} illustrates the local transformations of partial duality, deletion, and contraction on an edge of a ribbon graph.
\begin{table}
  \centering
\caption{Operations on an edge $e$ (highlighted in bold) of a ribbon graph}\label{Fig3}
  \includegraphics[width=15cm]{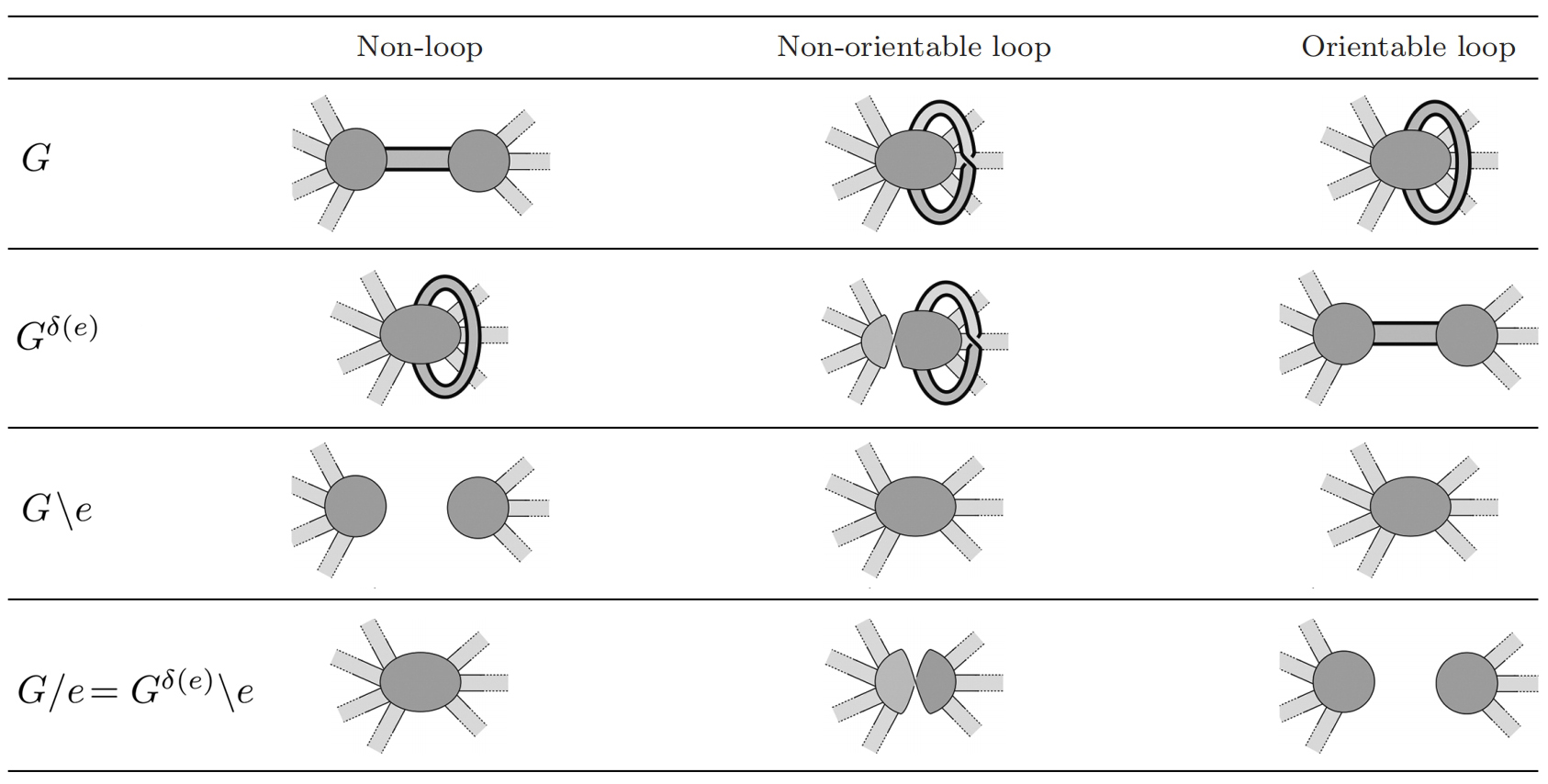}\\
\end{table}

Quasi-trees serve as the topological counterpart to trees in ribbon graph theory, and the substitution of ``tree" by ``quasi-tree" in the cycle matroid definition naturally gives rise to a delta-matroid.

A set system is a pair $D=(E, \mathcal{F})$, where $E$ is a finite set called the \emph{ground set}, and $\mathcal{F}$ is a collection of subsets of $E$. The elements of $\mathcal{F}$ are called \emph{feasible sets}. A set system is \emph{proper} if $\mathcal{F} \neq \emptyset$.

Following Bouchet~\cite{AB1}, a \emph{delta-matroid} is a proper set system $D=(E, \mathcal{F})$ satisfying the following symmetric exchange axiom: for any $X, Y \in \mathcal{F}$ and any $u \in X \Delta Y$, there exists $v \in X \Delta Y$ (possibly $v=u$) such that $X \Delta \{u, v\} \in \mathcal{F}$. Here, $X \Delta Y = (X \cup Y) \setminus (X \cap Y)$ denotes the symmetric difference of sets.

For $A \subseteq E$, the \emph{twist} of $D$ with respect to $A$, denoted $D \ast A$, is defined as:
\[
D \ast A = (E, \{A \Delta X \mid X \in \mathcal{F}\}).
\]

A ribbon graph is called a \emph{quasi-tree} if it has exactly one boundary component.
The number of spanning quasi-trees in a connected ribbon graph $G$ is denoted by $\tau(G)$. Remarkably, the collection of edge sets of all spanning quasi-trees in a ribbon graph forms the feasible sets of a delta-matroid on $E(G)$.

\begin{definition}[\cite{ChunJCTA}]
    Let $G = (V(G), E(G))$ be a connected ribbon graph, and define
    \[
    \mathcal{F}(G) := \{F \subseteq E(G) \mid F \text{ is the edge set of a spanning quasi-tree of } G\}.
    \]
    The delta-matroid of $G$ is defined as $D(G) = (E(G), \mathcal{F}(G))$.
\end{definition}

Chun et al.~\cite{Chun} established the equivalence between twisting and partial duality at the delta-matroid level.

\begin{proposition}[\cite{Chun}]\label{prop:tau-and-twist}
    Let $G$ be a ribbon graph and $A \subseteq E(G)$. Then
    \[
    D(G) \ast A = D(G^{\delta(A)}).
    \]
\end{proposition}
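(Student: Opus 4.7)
The plan is to prove the set-theoretic identity
\[
\{A \Delta F : F \in \mathcal{F}(G)\} = \mathcal{F}(G^{\delta(A)}),
\]
which is exactly the statement of the proposition after unwinding the definition of the twist $D(G)\ast A$. I would proceed by induction on $|A|$.

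For the base case $A=\emptyset$, both sides equal $\mathcal{F}(G)$ since $G^{\delta(\emptyset)}=G$ from the definition of partial duality. For the inductive step, fix $e\in A$ and set $A'=A\setminus\{e\}$. Partial duality obeys the composition rule $G^{\delta(A)}=(G^{\delta(A')})^{\delta(e)}$, while twisting is associative with respect to symmetric difference, giving $D(G)\ast A=(D(G)\ast A')\ast\{e\}$. The inductive hypothesis yields $D(G)\ast A'=D(G^{\delta(A')})$, and applying the single-edge case to the ribbon graph $G^{\delta(A')}$ with the edge $e$ gives $D(G^{\delta(A')})\ast\{e\}=D(G^{\delta(A)})$. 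Thus it suffices to establish the single-edge case: for any ribbon graph $H$ and any $e\in E(H)$,
\[
\mathcal{F}(H^{\delta(e)})=\{\{e\}\Delta F:F\in\mathcal{F}(H)\}.
\]

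To prove the single-edge case, I would fix $F_0=F\setminus\{e\}\subseteq E(H)\setminus\{e\}$ and compare, for each choice of ``include $e$ or not'', the spanning ribbon subgraphs of $H$ and of $H^{\delta(e)}$ whose edge sets differ by the symmetric difference with $\{e\}$. Equivalently, I would show that $(V(H),F)$ is connected with exactly one boundary component if and only if $(V(H^{\delta(e)}),F\Delta\{e\})$ is connected with exactly one boundary component. Since partial duality with respect to $e$ only modifies $H$ in a small neighbourhood of the ribbon $e$, and leaves all other edge-ribbons untouched, the verification reduces to the three local pictures displayed in Table~\ref{Fig3}, corresponding to the three possible types of $e$: non-loop edge, orientable loop, and non-orientable loop. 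In each case one directly traces the boundary arcs of the resulting surface and checks that ``attaching $e$ in $H$'' produces the same number of boundary components and the same connectivity pattern as ``not attaching $e$ in $H^{\delta(e)}$'', and conversely; this is the precise reason partial duality is the geometric counterpart of the twist on feasible sets.

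The main obstacle will be the boundary-tracing verification in the non-orientable loop case, where the M\"obius-band geometry of $e$ reverses orientations along boundary arcs and so makes the local identification of boundary components between $H$ and $H^{\delta(e)}$ less transparent than in the orientable-loop or non-loop cases. Careful bookkeeping of how the two line segments of $e$ connect into the boundary of the vertex-disc is required to confirm that the swap $F\leftrightarrow F\Delta\{e\}$ corresponds bijectively to preserving the quasi-tree property across the two graphs. Once the three local configurations are verified from Table~\ref{Fig3}, the single-edge statement follows, and the induction above then closes the proof of the proposition.
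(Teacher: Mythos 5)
First, a point of reference: the paper does not prove Proposition~\ref{prop:tau-and-twist} at all --- it is imported verbatim from Chun, Moffatt, Noble and Rueckriemen \cite{Chun} --- so there is no in-paper argument to compare yours against. Judged on its own, your overall strategy is the standard one and is sound: the reduction to a single edge works because partial duality composes over disjoint subsets, $(G^{\delta(A')})^{\delta(e)} = G^{\delta(A' \Delta \{e\})} = G^{\delta(A)}$ (a nontrivial fact you should cite or prove, e.g.\ from Chmutov's paper), while the twist trivially satisfies $(D * A') * \{e\} = D * A$.

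The gap is that the single-edge equivalence is the \emph{entire} mathematical content of the proposition, and that is precisely the step you leave as a sketch; as written, the proposal defers everything to ``tracing boundary arcs in Table~\ref{Fig3}.'' Two repairs would close it. First, aim for the stronger and cleaner local statement that for \emph{every} $F \subseteq E(H)$ the spanning ribbon subgraphs $(V(H), F)$ and $(V(H^{\delta(e)}), F \Delta \{e\})$ have the same number of boundary components; the quasi-tree equivalence then follows at once (one boundary component forces connectedness), and matching boundary arcs one-for-one is easier than arguing directly about a global count equalling $1$. Second, your justification that partial duality ``only modifies $H$ in a small neighbourhood of the ribbon $e$'' is not literally true at the level of surfaces: the vertex discs of $H^{\delta(e)}$ are glued along the boundary components of $(V(H), \{e\})$ and are therefore reassembled globally. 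What is genuinely local is the change in the attachment data (the arrow presentation), where only the two arrows of $e$ are moved and all other edge-ends retain their relative positions along the boundary; working there makes the three cases of Table~\ref{Fig3} --- non-loop, orientable loop, and the M\"{o}bius case you flag as delicate --- a finite, mechanical check. With those two points supplied, the induction closes and the proof is complete.
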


\section{Symbolic skew-adjacency matrices}

\begin{definition}
\normalfont
Let $B$ be a bouquet with edge set $[n]$. A \emph{signed rotation} of $B$ is a cyclic sequence of length $2n$ encoding the order of half-edges around the vertex. Each loop $i \in [n]$ appears twice in the sequence, with one occurrence labeled $i^a$ and the other occurrence labeled $i^b$.
The signs assigned to these labels are determined by the orientability of the corresponding loop: for an orientable loop, $i^a$ and $i^b$ are assigned identical signs, while for a non-orientable loop, they are assigned opposite signs.
\end{definition}

\begin{remark}
In the signed rotation representation, the positive sign is conventionally omitted in the notation. A bouquet may admit multiple signed rotations due to different choices in the starting point of the cyclic sequence, the direction of rotation (clockwise or counterclockwise), and the assignment of $a$ and $b$ labels to the two occurrences of each loop. These variations all yield equivalent representations of the same bouquet. See Figure~\ref{Fig02} for an example.
\end{remark}

\begin{definition}
\normalfont
Let $B$ be a bouquet with edge set $[n]$ and signed rotation $\sigma$. The \emph{symbolic skew-adjacency matrix} $\mathbf{A_{\sigma}^s}$ of $B$ with respect to $\sigma$ is an $n \times n$ matrix over the polynomial ring $\mathbb{Z}[x_{ij}]$, where $x_{ij}$ are commuting indeterminates for all $i,j \in [n]$. The entries of $\mathbf{A_{\sigma}^s}$ are defined as follows:

\begin{itemize}
\item \textbf{Diagonal entries ($i = j$):}
\[
\mathbf{A_{\sigma}^s}_{ii} =
\begin{cases}
x_{ii}, & \text{if the signs of $i^a$ and $i^b$ are opposite}, \\
0, & \text{if the signs of $i^a$ and $i^b$ are the same}.
\end{cases}
\]

\item \textbf{Off-diagonal entries ($i < j$):}
Consider the relative cyclic order of $i^a$, $i^b$, $j^a$, $j^b$ in $\sigma$ (ignoring their signs):
\[
\mathbf{A_{\sigma}^s}_{ij} =
\begin{cases}
x_{ij}, & \text{if the cyclic order is $i^a, j^a, i^b, j^b$}, \\
-x_{ij}, & \text{if the cyclic order is $i^a, j^b, i^b, j^a$}, \\
0, & \text{otherwise}.
\end{cases}
\]

\item \textbf{Skew-symmetry:}
For $i > j$, define $\mathbf{A_{\sigma}^s}_{ji} = -\mathbf{A_{\sigma}^s}_{ij}$.
\end{itemize}

We define two related matrices:
\begin{itemize}
\item The \emph{unsymbolic skew-adjacency matrix} $\mathbf{A_{\sigma}^u}$ is obtained from $\mathbf{A_{\sigma}^s}$ by substituting $x_{ij} = 1$ for all $i,j \in [n]$.
\item The \emph{adjacency matrix} $\mathbf{M}$ is defined by $\mathbf{M}_{ij} = |\mathbf{A_{\sigma}^u}_{ij}|$ for all $i,j \in [n]$.
\end{itemize}
\end{definition}

\begin{remark}\label{remark1}
\begin{description}
\item[(1)]
It is important to note that the symbolic skew-adjacency matrix $\mathbf{A_{\sigma}^s}$ and the unsymbolic skew-adjacency matrix $\mathbf{A_{\sigma}^u}$ depend not only on the bouquet $B$ but also on the specific signed rotation $\sigma$. Different signed rotations of the same bouquet may yield different skew-adjacency matrices.

Specifically, the dependence on $\sigma$ manifests as follows:
\begin{itemize}
\item Changing the starting point of the cyclic sequence leaves $\mathbf{A_{\sigma}^s}$ and $\mathbf{A_{\sigma}^u}$ unchanged.
\item Reversing the cyclic direction flips the signs of off-diagonal entries (i.e., $x_{ij}$ becomes $-x_{ij}$ and $-x_{ij}$ becomes $x_{ij}$ in $\mathbf{A_{\sigma}^s}$, while $1$ becomes $-1$ and $-1$ becomes $1$ in $\mathbf{A_{\sigma}^u}$).
\item Swapping the $a$ and $b$ labels for a loop multiplies the corresponding row and column by $-1$ in both $\mathbf{A_{\sigma}^s}$ and $\mathbf{A_{\sigma}^u}$.
\end{itemize}

However, the adjacency matrix $\mathbf{M}$ is independent of the choice of signed rotation. Since this matrix is defined by taking absolute values of the entries of $\mathbf{A_{\sigma}^u}$, it depends only on the bouquet $B$ itself. Notably, $\mathbf{M}$ is precisely the adjacency matrix of the signed intersection graph of $B$ \cite{Moffatt2019, 2022Yan}.

\item[(2)]
Let $B$ be a bouquet with edge set $[n]$ and signed rotation $\sigma$, and let $\mathbf{A_{\sigma}^s}$ be its symbolic skew-adjacency matrix. By the Leibniz formula, the determinant of $\mathbf{A_{\sigma}^s}$ is given by
\[
\det(\mathbf{A_{\sigma}^s}) = \sum_{\pi \in S_n} \operatorname{sgn}(\pi) \prod_{i=1}^n \mathbf{A_{\sigma}^s}_{i,\pi(i)},
\]
where $S_n$ denotes the symmetric group on $[n]$, and each monomial corresponds to a permutation $\pi$. Since each entry $\mathbf{A_{\sigma}^s}_{ij}$ is an element of the ring $\mathbb{Z}[x_{ij}]$, it follows that $\det(\mathbf{A_{\sigma}^s}) \in \mathbb{Z}[x_{ij}]$.
Furthermore, the determinant of $I_n + \mathbf{A_{\sigma}^s}$ can be expanded as a sum over all principal submatrices:
\[
\det(I_n + \mathbf{A_{\sigma}^s}) = \sum_{X \subseteq [n]} \det(\mathbf{A_{\sigma}^s}[X]) \in \mathbb{Z}[x_{ij}],
\]
where $\mathbf{A_{\sigma}^s}[X]$ denotes the principal submatrix of $\mathbf{A_{\sigma}^s}$ induced by $X \subseteq [n]$, with the convention that $\det(\mathbf{A_{\sigma}^s}[\emptyset]) = 1$.
\end{description}
\end{remark}

\begin{definition}
\normalfont
We define a reduction map $f: \mathbb{Z}[x_{ij}] \to \mathbb{Z}[x_A : A \subseteq [n]]$ as follows.
The target ring has generators $\{x_A : A \subseteq [n]\}$ with multiplication $x_A \cdot x_B = x_{A \cup B}$
and unit $x_\emptyset$. For a monomial $m = \prod_{r=1}^k x_{i_r j_r}^{e_r}$ with $e_r\in\mathbb{Z}_{\geq 1}$, define
\[
f(m) = x_A, \quad \text{where } A = \bigcup_{r=1}^k \{i_r,j_r\}.
\]
This extends linearly to polynomials: for $P = \sum_m \alpha_m m \in \mathbb{Z}[x_{ij}]$ with $\alpha_m \in \mathbb{Z}$,
\[
f(P) = \sum_m \alpha_m f(m).
\]
The map $f$ preserves coefficients while replacing each monomial by a single variable indexed by the union of all indices appearing in its factors.
\end{definition}

\section{Proof of the Matrix Quasi-tree Theorem}

Before proving the Matrix Quasi-tree Theorem (Theorem~\ref{thm:main}), we establish some key lemmas.
The next lemma combines results from two references. For orientable bouquets,
Merino et al.~\cite{Merino2023} established the following result using directed connected maps and principally unimodular matrices.
Deng et al.~\cite{Deng24} later extended this result to bouquets with exactly
one non-orientable loop.

\begin{lemma}[\cite{Deng24}, \cite{Merino2023}]\label{orient}
Let $B$ be a bouquet with edge set $[n]$ and signed rotation $\sigma$, and let $\mathbf{A_{\sigma}^u}$ be its unsymbolic skew-adjacency matrix. For any subset $X \subseteq [n]$, if $B$ has at most one non-orientable loop, then
\[
\det(\mathbf{A_{\sigma}^u}[X]) =
\begin{cases}
1, & \text{if } X \text{ is the edge set of a spanning quasi-tree of } B, \\
0, & \text{otherwise}.
\end{cases}
\]
\end{lemma}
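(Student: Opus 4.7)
The plan is to split the statement into the two cases corresponding to the cited references: the orientable case (Merino et al.) and the case of exactly one non-orientable loop (Deng et al.).

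For the orientable case, every diagonal entry of $\mathbf{A_{\sigma}^u}$ is zero by definition, so $\mathbf{A_{\sigma}^u}$ is skew-symmetric, and hence so is every principal submatrix $\mathbf{A_{\sigma}^u}[X]$. Thus $\det(\mathbf{A_{\sigma}^u}[X]) = \mathrm{Pf}(\mathbf{A_{\sigma}^u}[X])^{2} \geq 0$, and in particular vanishes when $|X|$ is odd. For even $|X|$, I would expand the Pfaffian as a signed sum over perfect matchings of $X$ and argue that the interlacement-based sign pattern encoded by $\sigma$ produces enough cancellation to leave the Pfaffian equal to $0$ or $\pm 1$. Equivalently, one shows that $\mathbf{A_{\sigma}^u}$ is principally unimodular in Bouchet's sense, so every principal minor lies in $\{-1, 0, 1\}$, and the topological interpretation of chord-diagram partitions identifies the surviving cases with edge sets of spanning quasi-trees. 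This is the content of Merino et al.'s directed-connected-map argument, and equivalently of Macris--Pule on Eulerian partitions.

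For the case of exactly one non-orientable loop $k$, I would split further on whether $k \in X$. If $k \notin X$, then $\mathbf{A_{\sigma}^u}[X]$ has all zero diagonal entries, so the orientable argument applies verbatim and the quasi-tree status of $B|_{X}$ is inherited from the orientable sub-bouquet on $X$. If $k \in X$, write $\mathbf{A_{\sigma}^u}[X] = S + e_{k} e_{k}^{\top}$ where $S$ is the skew-symmetric matrix obtained by zeroing the $(k,k)$-entry. By the matrix determinant lemma,
\[
\det(\mathbf{A_{\sigma}^u}[X]) = \det(S) + \det\bigl(S[X \setminus \{k\}]\bigr),
\]
and parity forces exactly one of the two summands to vanish (the odd-sized skew-symmetric one). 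Hence $\det(\mathbf{A_{\sigma}^u}[X])$ reduces to the determinant of a skew-symmetric matrix of even order, which is in $\{0, 1\}$ by the orientable case. To identify exactly which $X$ with $k \in X$ yield value $1$, I would recognize $S$ as the skew-adjacency matrix of the partial dual $B^{\delta(\{k\})}$ at the loop $k$ (in which $k$ has been replaced by an orientable loop), modulo a basis change that preserves principal minors, and then invoke Proposition \ref{prop:tau-and-twist} to get $D(B^{\delta(\{k\})}) = D(B) \ast \{k\}$. Combining this twist identity with the parity case analysis matches nonvanishing determinants precisely to spanning quasi-trees of $B$.

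The main obstacle is the Pfaffian cancellation step in the orientable case: demonstrating that the signed sum over perfect matchings of $X$ really collapses to $0$ or $\pm 1$. This rests on the delicate sign conventions for interlaced chords in the signed rotation and ultimately on Bouchet's theory of principally unimodular skew-symmetric matrices, and only there does one need the genuinely topological input linking nonvanishing minors to single-boundary-component ribbon subgraphs. Once this is in hand, the one-non-orientable-loop extension is a clean algebraic reduction via the matrix determinant lemma together with the delta-matroid twist identity.
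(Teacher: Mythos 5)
First, note that the paper does not prove this lemma at all: it is imported verbatim from \cite{Merino2023} (orientable case) and \cite{Deng24} (exactly one non-orientable loop), so there is no internal proof to compare against. Your sketch of the orientable case correctly identifies the two load-bearing ingredients --- principal unimodularity of the skew interlacement matrix (Bouchet) and the identification of nonsingular principal submatrices with one-boundary-component ribbon subgraphs (Macris--Pul\'e, Merino et al.) --- but it proves neither; as written it is a reduction to exactly the sources the lemma already cites, which is acceptable for a quoted result but is not an independent argument. The matrix determinant lemma computation in the second case is correct: writing $\mathbf{A_{\sigma}^u}[X]=S+e_ke_k^{\top}$ gives $\det(\mathbf{A_{\sigma}^u}[X])=\det(S)+\det(S[X\setminus\{k\}])$, and parity kills exactly one summand, so every principal minor reduces to an even-order skew-symmetric determinant and lies in $\{0,1\}$ once principal unimodularity is granted.

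The genuine gap is in your identification step for the non-orientable loop. You claim that $S$ (the matrix obtained by zeroing the $(k,k)$ entry) is, up to a minor-preserving basis change, the skew-adjacency matrix of the partial dual $B^{\delta(\{k\})}$, and you then invoke the twist identity $D(B^{\delta(\{k\})})=D(B)\ast\{k\}$. This is false: as Lemma \ref{lemma:matrixpivot} of the paper records, partial duality at a non-orientable loop acts on the adjacency matrix by the principal pivot $\mathbf{M}\ast\{k\}$, whose lower-right block is $R-Q^{\top}P^{-1}Q$, not $R$; it alters all entries indexed by pairs of loops interlaced with $k$, not merely the diagonal entry at $k$. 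What $S$ actually is is the skew-adjacency matrix of the \emph{different} bouquet obtained from $B$ by replacing the non-orientable loop $k$ with an orientable loop having the same interlacement pattern, and relating the spanning quasi-trees of that bouquet (and of $B|_{X\setminus\{k\}}$, in the odd case) to those of $B$ is precisely the topological content that still has to be supplied --- e.g.\ via the $\mathbb{F}_2$ boundary-component formula, or via the pivot/contraction induction that the paper itself uses later in the proof of Theorem \ref{nonorient-genernal}. As it stands, your case analysis establishes that the minors lie in $\{0,1\}$ but does not establish which subsets $X$ containing $k$ give the value $1$.
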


\begin{remark}
A direct consequence of Lemma~\ref{orient} is that for bouquets with at most one non-orientable loop, the polynomial $f(\det(I_n + \mathbf{A_{\sigma}^s}))$ has all coefficients equal to 1. Therefore, the modulo 2 reduction in Theorem~\ref{thm:main} becomes unnecessary in these cases.
\end{remark}

The pivot operation is defined for matrices over arbitrary fields (see, e.g., \cite{2000TMJ}).

\begin{definition}
Let $\mathbf{A}$ be a square matrix over a field \( \mathbb{K}\), with rows and columns labelled (in the same order) by a set $E$. For any $X \subseteq E$, let $\overline{X} = E \setminus X$. Assume without loss of generality (after reordering if needed) that $X$ labels the first $|X|$ rows and columns.
Then $\mathbf{A}$ has the block form
\[
\mathbf{A} =
\begin{blockarray}{ccc}
 & X & \overline{X} \\
\begin{block}{c[cc]}
X & P & Q \\
\overline{X} & R & S \\
\end{block}
\end{blockarray}
\]
where $P = \mathbf{A}[X]$. If $P$ is non-singular, the \textit{pivot} of $\mathbf{A}$ on $X$, denoted $\mathbf{A}*X$, is defined as the matrix
\[
\mathbf{A}*X =
\begin{blockarray}{ccc}
 & X & \overline{X} \\
\begin{block}{c[cc]}
X & P^{-1} & -P^{-1}Q \\
\overline{X} & RP^{-1} & S- RP^{-1}Q \\
\end{block}
\end{blockarray}.
\]
\end{definition}

\begin{lemma}[\cite{Moffatt2019}]\label{lemma:matrixpivot}
Let $B$ be a bouquet with edge set $[n]$, and let $\mathbf{M}(B)$ be the adjacency matrix of $B$ over $\mathbb{F}_2$. If $e_1\in [n]$ is a non-orientable loop, then the pivot matrix \( \mathbf{M}(B) * \{e_1\} \) coincides with the adjacency matrix  of \( B^{\delta(e_1)} \), i.e., \[\mathbf{M}(B) * \{e_1\}=\mathbf{M}(B^{\delta(e_1)}),\]
where $\mathbf{M}(B^{\delta(e_1)})$ is the adjacency matrix of $B^{\delta(e_1)}$.
\end{lemma}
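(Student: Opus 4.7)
The plan is to verify the identity entrywise, by first determining how partial duality on the non-orientable loop $e_1$ transforms the signed rotation of $B$, and then comparing the resulting adjacency matrix with the pivot $\mathbf{M}(B)*\{e_1\}$ directly.

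First I would unpack the pivot formula. Because $e_1$ is non-orientable, $\mathbf{M}(B)_{11}=1$ over $\mathbb{F}_2$, so the block $P=[1]$ is invertible and the pivot is defined, with
\[
(\mathbf{M}(B)*\{e_1\})_{ij} = \mathbf{M}(B)_{ij} + \mathbf{M}(B)_{i1}\,\mathbf{M}(B)_{1j} \pmod 2
\]
for $i,j \ne 1$, while the first row and column are unchanged and $(\mathbf{M}(B)*\{e_1\})_{11}=1$. In particular, the $(i,i)$ entry for $i\ne 1$ flips exactly when loop $i$ is interlaced with $e_1$ in $B$. Next, I would derive the signed rotation $\sigma'$ of $B^{\delta(e_1)}$: writing $\sigma$ cyclically as $[e_1^a, W, e_1^b]$ where $e_1^a$ and $e_1^b$ have opposite signs, the ribbon subgraph $B|_{\{e_1\}}$ is a M\"{o}bius band with a single boundary component, and gluing a new vertex-disc along that boundary while removing the original vertex-disc reroutes the remaining loop-ribbons so that $\sigma' = [e_1^a, \overline{W}, e_1^b]$, where $\overline{W}$ is $W$ read in reverse with each half-edge's sign adjusted according to the orientability twist picked up along the traversal.

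Given $\sigma'$, I would verify the pivot identity by a case analysis on the placement of the half-edges $i^a, i^b, j^a, j^b$ (for $i,j \ne e_1$) relative to $e_1^a, e_1^b$. If neither $i$ nor $j$ is interlaced with $e_1$, all four of their half-edges lie in the same arc bounded by $e_1^a$ and $e_1^b$, so reversing $W$ preserves their relative cyclic order and hence $\mathbf{M}(B^{\delta(e_1)})_{ij} = \mathbf{M}(B)_{ij}$. The same conclusion holds when exactly one of $i,j$ is interlaced with $e_1$, since only one of the loops has its half-edges split by $W$. When both are interlaced with $e_1$, exactly one half-edge of each lies inside $W$ and the reversal swaps them, flipping the interlacement status of $i$ with $j$. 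For the diagonal, loop $i$'s orientability flips iff the signs of $i^a$ and $i^b$ change relative to each other, which happens iff exactly one of $i^a, i^b$ lies inside $W$, i.e.\ iff $i$ is interlaced with $e_1$. Each outcome matches the pivot formula.

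The main obstacle is justifying the description of $\sigma'$ in the second step, which requires a careful topological bookkeeping of how the single boundary component of the M\"{o}bius band $B|_{\{e_1\}}$ dictates both the cyclic attaching order and the orientability twists of the other loop-ribbons around the new vertex-disc after the gluing and removal operations of partial duality. Once this transformation is established, the entry-by-entry comparison reduces to the finite combinatorial case analysis above.
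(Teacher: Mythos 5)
Your overall strategy (determine how $\delta(e_1)$ transforms the signed rotation, then match entries against the pivot formula, whose unpacking over $\mathbb{F}_2$ you state correctly) is a sensible route; note the paper itself does not prove this lemma but cites it to Moffatt's survey, so a complete argument of this kind would genuinely add content. However, your proposal has a real gap exactly at the step you yourself flag as ``the main obstacle,'' and the way you set it up would not go through as written. The two occurrences $e_1^a,e_1^b$ cut the cyclic sequence into \emph{two} arcs, so $\sigma$ cannot in general be written as $[e_1^a, W, e_1^b]$ with a single block $W$; you must write $\sigma=[e_1^a, W_1, e_1^b, W_2]$. Tracing the unique boundary component of the M\"{o}bius band $(V,\{e_1\})$ (enter along one free side of the twisted ribbon, traverse $W_1$ forward, cross to the other free side, traverse $W_2$ backward) shows that the correct transformation is: reverse exactly one of the two arcs, say $W_2$, and flip the sign of every half-edge in that arc, leaving the other arc untouched (equivalently, one may reverse and sign-flip $W_1$ instead; the two descriptions differ by the allowed re-readings of a rotation). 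Your rule ``reverse $W$ with each half-edge's sign adjusted according to the orientability twist'' both conflates the two arcs and leaves the sign rule unspecified, and the sign rule is not a cosmetic detail: your diagonal verification (orientability of loop $i$ flips iff $i$ is interlaced with $e_1$) is valid only if all signs in the reversed arc are flipped and none in the other arc are.

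Two further points in your case analysis need repair once the two-arc picture is in place. First, the assertion that when neither $i$ nor $j$ is interlaced with $e_1$ ``all four of their half-edges lie in the same arc'' is false; $i$'s pair may lie in $W_1$ and $j$'s in $W_2$ (the conclusion that $\mathbf{M}_{ij}$ is unchanged still holds, but the stated reason does not cover this case). Second, you unpack the pivot as leaving the first row and column unchanged over $\mathbb{F}_2$, but you never verify the matching statement on the ribbon-graph side, namely that $i$ is interlaced with $e_1$ in $B^{\delta(e_1)}$ iff it is in $B$; with the correct transformation this is immediate (one occurrence of $i$ lies in each arc before iff after), but it must be said. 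With the boundary-tracing argument for $\sigma'$ actually carried out and these cases corrected, your entrywise comparison does complete a proof; as it stands, the crucial transformation is asserted rather than proved, and asserted in a form that is not correct in general.
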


\begin{theorem}\label{nonorient-genernal}
Let $B$ be a bouquet with edge set $[n]$ and signed rotation $\sigma$, and let $\mathbf{M}$ be the adjacency matrix of $B$. Then for any subset $X \subseteq [n]$,
\begin{eqnarray}\label{eq10}
\det(\mathbf{M}[X]) \bmod 2 =
\begin{cases}
1, & \text{if } X \text{ is the edge set of a spanning quasi-tree of } B, \\
0, & \text{otherwise}.
\end{cases}
\end{eqnarray}
\end{theorem}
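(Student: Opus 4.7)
My plan is to prove Theorem~\ref{nonorient-genernal} by induction on the number $k$ of non-orientable loops of $B$, using partial duality at a single non-orientable loop as the reduction step.

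For the base case $k \leq 1$, I would exploit the fact that $\mathbf{M}_{ij} = |(\mathbf{A_{\sigma}^u})_{ij}|$ and every entry of $\mathbf{A_{\sigma}^u}$ lies in $\{-1, 0, 1\}$, so the two matrices agree entrywise modulo $2$. Hence $\det(\mathbf{M}[X]) \equiv \det(\mathbf{A_{\sigma}^u}[X]) \pmod 2$ for every $X \subseteq [n]$, and Lemma~\ref{orient} gives precisely the desired dichotomy.

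For the inductive step $k \geq 2$, I would choose a non-orientable loop $e_1$ and apply the principal pivot identity, which is valid over any field for which the pivoting minor is invertible. Since $\mathbf{M}_{e_1 e_1} = 1$ over $\mathbb{F}_2$, the identity gives
\begin{equation*}
\det(\mathbf{M}(B)[X]) \equiv \det\bigl((\mathbf{M}(B) * \{e_1\})[\{e_1\} \Delta X]\bigr) \pmod{2}
\end{equation*}
for every $X \subseteq [n]$. By Lemma~\ref{lemma:matrixpivot}, the pivot $\mathbf{M}(B) * \{e_1\}$ equals $\mathbf{M}(B^{\delta(e_1)})$ over $\mathbb{F}_2$. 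Applying the inductive hypothesis to $B^{\delta(e_1)}$ reduces the right-hand side to the indicator of $\{e_1\} \Delta X \in \mathcal{F}(B^{\delta(e_1)})$, and Proposition~\ref{prop:tau-and-twist} identifies $\mathcal{F}(B^{\delta(e_1)}) = \{e_1\} \Delta \mathcal{F}(B)$, so this condition is equivalent to $X \in \mathcal{F}(B)$, closing the induction.

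The hard part will be guaranteeing that $B^{\delta(e_1)}$ has strictly fewer non-orientable loops than $B$ so that the induction actually terminates. This is not automatic for an arbitrary non-orientable loop: if the non-orientable loops of $B$ are pairwise non-interlaced (e.g., $\sigma = [-1^a, -1^b, -2^a, -2^b]$), taking the partial dual at any one of them leaves the adjacency matrix, and hence the set of non-orientable loops, unchanged. My plan to address this is either to select $e_1$ from an interlaced cluster of non-orientable loops so that the count strictly decreases, or to upgrade Lemma~\ref{lemma:matrixpivot} to a multi-pivot version via iteration and choose a set $A \subseteq [n]$ for which $B^{\delta(A)}$ has at most one non-orientable loop. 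In the residual cases where no such reduction is available, the adjacency matrix splits as a block-diagonal sum aligned with the components of the signed intersection graph of $B$, which allows a component-by-component verification reducing each block either to the base case or to a strictly smaller inductive instance.
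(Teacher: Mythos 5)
Your base case and the core pivot mechanism are fine: the Tucker-type identity $\det\bigl((\mathbf{M}*\{e_1\})[X\,\Delta\,\{e_1\}]\bigr)\equiv\det(\mathbf{M}[X])\pmod 2$ does hold over $\mathbb{F}_2$, although you would need to prove or cite it, since the paper itself only uses the one-sided Schur-complement case $e_1\in X$. The genuine gap is your induction measure. You induct on the number of non-orientable loops, and pivoting at a non-orientable loop does not reduce it; you noticed this, but none of your three repairs is established, and the first two fail outright. Over $\mathbb{F}_2$ one has $(\mathbf{M}*\{e_1\})_{jj}=\mathbf{M}_{jj}+\mathbf{M}_{je_1}$, so by Lemma~\ref{lemma:matrixpivot} the pivot toggles the orientability of exactly the loops interlaced with $e_1$ and leaves $e_1$ non-orientable. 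Hence choosing $e_1$ ``from an interlaced cluster'' need not help: if the non-orientable loops $1,2$ are interlaced with each other and each is additionally interlaced with two private orientable loops (a double star, hence a realizable interlacement pattern), then pivoting at either of them raises the count from $2$ to $3$, and no non-orientable loop admits a count-decreasing pivot, even though the intersection graph is connected, so your block-diagonal fallback does not apply there either. The multi-pivot repair is impossible in general: for $n\ge 2$ pairwise non-interlaced non-orientable loops one has $\mathbf{M}=I_n$, every principal pivot returns $I_n$, and so no feasible $A$ makes $B^{\delta(A)}$ have at most one non-orientable loop. Finally, the block-diagonal fallback silently uses that the quasi-tree delta-matroid of a bouquet is the direct sum over the components of its intersection graph (true, but unproved in your sketch), and you give no argument that the ``residual cases'' are exactly those it covers.

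The paper avoids all of this by inducting on $|X|$ rather than on any property of $B$: it pivots $\mathbf{M}[X]$ at a non-orientable loop $e_1\in X$, identifies the Schur complement $R-Q^{\top}Q$ (via Lemma~\ref{lemma:matrixpivot}) as the adjacency matrix of the contraction $B|_X/e_1=(B|_X)^{\delta(e_1)}\setminus e_1$, applies the induction hypothesis to that bouquet on the strictly smaller ground set $X\setminus\{e_1\}$, and uses that $X\in\mathcal{F}(B|_X)$ if and only if $X\setminus\{e_1\}\in\mathcal{F}(B|_X/e_1)$. Deleting $e_1$ after the pivot is the step your proposal is missing: once the ground set shrinks, no control of the number of non-orientable loops is needed, and the rest of your argument (base case, pivot identity, twist correspondence) goes through essentially unchanged.
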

\begin{proof}
We first consider the case when $B$ is orientable. Let $\mathbf{A_{\sigma}^u}$ be the unsymbolic skew-adjacency matrix of $B$ with respect to $\sigma$. Since $\mathbf{M}_{ij} = |\mathbf{A_{\sigma}^u}_{ij}|$ and $\mathbf{A_{\sigma}^u}_{ij} \in \{-1, 0, 1\}$, we have $\mathbf{M}_{ij} \equiv \mathbf{A_{\sigma}^u}_{ij} \pmod{2}$ for all $i,j$. Hence, $\mathbf{A_{\sigma}^u} \equiv \mathbf{M} \pmod{2}$. This implies that for any subset $X \subseteq [n]$, the principal submatrices satisfy $\mathbf{A_{\sigma}^u}[X] \equiv \mathbf{M}[X] \pmod{2}$, and therefore
\[
\det(\mathbf{A_{\sigma}^u}[X]) \equiv \det(\mathbf{M}[X]) \pmod{2}.
\]
By Lemma \ref{orient}, $\det(\mathbf{A_{\sigma}^u}[X])$ is 1 if and only if $X$ is the edge set of a spanning quasi-tree of $B$, and 0 otherwise. Thus, the same holds for $\det(\mathbf{M}[X]) \bmod 2$.

For the case when $B$ has at least one non-orientable loop, we proceed by induction on $k = |X|$, the size of the edge subset.

\textbf{Base Cases:}

\textbf{Case $k = 0$:} When $X = \emptyset$, the matrix $\mathbf{M}[X]$ is empty and its determinant is defined to be $1$ by convention. The empty edge set corresponds to the spanning quasi-tree of the bouquet consisting of a single isolated vertex (which has one boundary component). Thus, $\det(\mathbf{M}[X]) \bmod 2 = 1$, and equation \eqref{eq10} holds.

\textbf{Case $k = 1$:} When $X$ contains a single edge, $\mathbf{M}[X]$ is a $1 \times 1$ matrix. If the edge is a non-orientable loop, then $\mathbf{M}[X] = [1]$ and $\det(\mathbf{M}[X]) = 1$. If the edge is an orientable loop, then $\mathbf{M}[X] = [0]$ and $\det(\mathbf{M}[X]) = 0$. A single edge of a bouquet forms a spanning quasi-tree if and only if it is non-orientable. Therefore, $\det(\mathbf{M}[X]) \bmod 2 = 1$ if and only if $X$ is the edge set of a spanning quasi-tree, satisfying equation \eqref{eq10}.

\textbf{Inductive Step:} Assume the result holds for all edge subsets of size less than $k$, where $k \geq 2$. Now consider a set $X$ with $|X| = k$.

Let $B|_X$ denote the bouquet obtained by restricting $B$ to the subset $X$ of edges. We consider two subcases based on the orientability of $B|_X$.

\textbf{Case 1: $B|_X$ is orientable.}
By the argument at the beginning of the proof, equation \eqref{eq10} holds.

\textbf{Case 2: $B|_X$ is non-orientable.}
Let \( e_1 \in X \) be a non-orientable loop. Note that the adjacency matrix of the restricted bouquet \( B|_X \) is exactly the principal submatrix \( \mathbf{M}[X] \).

We partition the matrix $\mathbf{M}[X]$, placing $e_1$ first:
$$
\mathbf{M}[X] = \begin{bmatrix}
P & Q \\
Q^\top & R
\end{bmatrix},
$$
where $P = \mathbf{M}[X][\{e_1\}] = [1]$ (since $e_1$ is non-orientable). Hence, $P$ is invertible. Using the block matrix determinant formula:
\begin{equation*}
\det\begin{bmatrix}
P & Q \\
Q^\top & R
\end{bmatrix} = \det(P) \cdot \det(R - Q^\top P^{-1}Q). \label{blockdet}
\end{equation*}
Since $P = [1]$, we have $P^{-1} = [1]$, so
\[
\det(\mathbf{M}[X]) = \det(R - Q^\top Q).
\]
Thus,
\begin{equation}
\det(\mathbf{M}[X]) \bmod 2 = \det(R - Q^\top Q) \bmod 2. \label{detmod2}
\end{equation}

Recall that by Lemma~\ref{lemma:matrixpivot}, the pivot operation on $\mathbf{M}[X]$ with respect to $\{e_1\}$ yields the adjacency matrix of $(B|_X)^{\delta(e_1)}$ over $\mathbb{F}_2$.  In our partitioned matrix $\mathbf{M}[X]= \begin{bmatrix} P & Q \\ Q^\top & R \end{bmatrix}$,
the pivot operation gives:
\[
\mathbf{M}[X] * \{e_1\} = \begin{bmatrix} P^{-1} & -P^{-1}Q \\ Q^\top P^{-1} & R - Q^\top P^{-1}Q \end{bmatrix}.
\]
Since $P = [1]$ and $P^{-1} = [1]$, the bottom-right block becomes $R - Q^\top Q$.
This submatrix $R - Q^\top Q$ corresponds to the adjacency matrix of the bouquet obtained by first taking the partial dual of $B|_X$ with respect to $e_1$ to get $(B|_X)^{\delta(e_1)}$,
and then deleting $e_1$ to obtain  $(B|_X)^{\delta(e_1)}\backslash \{e_1\}$, that is, $B|_X/\{e_1\}$.
Therefore, over $\mathbb{F}_2$, $R - Q^\top Q$ is the adjacency matrix of  $B|_X/\{e_1\}$.

By the induction hypothesis, equation \eqref{eq10} holds for the bouquet $B|_X/\{e_1\}$ on the smaller edge set $X \setminus \{e_1\}$, that is,
\begin{eqnarray}\label{eq11}
\det(R - Q^\top Q) \bmod 2 =
\begin{cases}
1, & \text{if } B|_X/\{e_1\} \text{ is a spanning quasi-tree} , \\
0, & \text{otherwise}.
\end{cases}
\end{eqnarray}

From Table \ref{Fig3}, we see
that $F\in \mathcal{F}(B|_X)$ and $e_1\in F$ if and only if $F\backslash e_1\in \mathcal{F}((B|_X)^{\delta(e_1)})$ if and only if $F\backslash e_1\in \mathcal{F}((B|_X)^{\delta(e_1)}\backslash e_1)=\mathcal{F}((B|_X)/e_1)$.
Therefore, $B|_X$ is a spanning quasi-tree if and only if  $B|_X/\{e_1\}$ is a spanning quasi-tree.
Then by equations \eqref{detmod2} and \eqref{eq11}, we conclude
\begin{eqnarray*}\label{eq12}
\det(\mathbf{M}[X]) \bmod 2 =
\begin{cases}
1, & \text{if } B|_X~\text{is a spanning quasi-tree}, \\
0, & \text{otherwise}.
\end{cases}
\end{eqnarray*}
Hence,
\begin{eqnarray*}
	\det(\mathbf{M}[X]) \bmod 2=\left\{\begin{array}{ll}
			1, & \mbox{if}~X ~\mbox{is the edge set of a spanning quasi-tree of}~B,\\
			0, & \mbox{otherwise}.
		\end{array}\right.
	\end{eqnarray*}
\end{proof}

\begin{lemma}
\label{thm:bouquet-det}
Let $B$ be a bouquet with edge set $[n]$ and signed rotation $\sigma$, and let $\mathbf{A_{\sigma}^s}$ be its symbolic skew-adjacency matrix.
For any subset $X \subseteq [n]$, we have
 \begin{equation*}\label{eq:dets-mod2}
        f\big(\det(\mathbf{A_{\sigma}^s}[X])\big) \bmod 2 =
        \begin{cases}
            x_X, & \text{if $X$ is the edge set of a spanning quasi-tree of $B$}, \\
            0, & \text{otherwise}.
        \end{cases}
    \end{equation*}
\end{lemma}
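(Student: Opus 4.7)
The plan is to bridge the symbolic statement with the mod-$2$ determinant statement already proved in Theorem \ref{nonorient-genernal} by showing that
\[
f(\det(\mathbf{A_{\sigma}^s}[X])) \;=\; \det(\mathbf{A_{\sigma}^u}[X]) \cdot x_X
\]
as elements of $\mathbb{Z}[x_A : A \subseteq [n]]$. Once this identity is in hand, the proof finishes in two easy strokes: reducing modulo $2$ and using $\mathbf{A_{\sigma}^u} \equiv \mathbf{M} \pmod 2$ (since $\mathbf{M}_{ij} = |\mathbf{A_{\sigma}^u}_{ij}|$ and the entries of $\mathbf{A_{\sigma}^u}$ lie in $\{-1,0,1\}$) gives $\det(\mathbf{A_{\sigma}^u}[X]) \equiv \det(\mathbf{M}[X]) \pmod 2$, and then Theorem \ref{nonorient-genernal} identifies the coefficient of $x_X$ as the indicator of $X$ being the edge set of a spanning quasi-tree.

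To prove the boxed identity I would first expand via the Leibniz formula over the symmetric group $S_X$:
\[
\det(\mathbf{A_{\sigma}^s}[X]) \;=\; \sum_{\pi \in S_X} \operatorname{sgn}(\pi) \prod_{i \in X} \mathbf{A_{\sigma}^s}_{i,\pi(i)}.
\]
The key observation is that every nonzero entry $\mathbf{A_{\sigma}^s}_{i,\pi(i)}$ is of the form $\varepsilon\, x_{\min(i,\pi(i)),\max(i,\pi(i))}$ with $\varepsilon \in \{-1,+1\}$ (including the diagonal case $i=\pi(i)$, where the entry is either $x_{ii}$ or $0$), and moreover the sign $\varepsilon$ is exactly $\mathbf{A_{\sigma}^u}_{i,\pi(i)}$. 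Hence for each $\pi$ whose contribution is nonzero, the product $\prod_i \mathbf{A_{\sigma}^s}_{i,\pi(i)}$ is a single monomial in the $x_{ij}$ whose index set is $\bigcup_{i\in X}\{i,\pi(i)\} = X$ (the union equals $X$ because $\pi$ permutes $X$). Applying $f$, which collapses any such monomial to $x_X$ with its sign preserved, yields
\[
f\bigl(\operatorname{sgn}(\pi)\prod_{i\in X} \mathbf{A_{\sigma}^s}_{i,\pi(i)}\bigr) \;=\; \operatorname{sgn}(\pi)\prod_{i\in X}\mathbf{A_{\sigma}^u}_{i,\pi(i)} \cdot x_X,
\]
an identity that remains valid (trivially) for permutations with vanishing contribution. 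Summing over $\pi \in S_X$ and using linearity of $f$ gives the boxed identity.

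The only conceptual subtlety — and the step deserving the most care in the writeup — is verifying that $f$ applied term-by-term in the Leibniz expansion agrees with $f$ applied to the combined polynomial. This is immediate from $\mathbb{Z}$-linearity of $f$, but it is precisely the place where one must resist the temptation to combine like terms in $\mathbb{Z}[x_{ij}]$ before applying $f$, since $f$ is not a ring homomorphism (different monomials can map to the same $x_X$). After that, the rest is routine: combine the boxed identity with Theorem \ref{nonorient-genernal} to conclude.
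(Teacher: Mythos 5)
Your proposal is correct and follows essentially the same route as the paper: expand $\det(\mathbf{A_{\sigma}^s}[X])$ by the Leibniz formula, use linearity of $f$ to collapse each permutation's monomial to $\operatorname{sgn}(\pi)\bigl(\prod_{i\in X}\mathbf{A_{\sigma}^u}_{i,\pi(i)}\bigr)x_X$, obtaining $f(\det(\mathbf{A_{\sigma}^s}[X]))=\det(\mathbf{A_{\sigma}^u}[X])\cdot x_X$, and then conclude via $\mathbf{A_{\sigma}^u}\equiv\mathbf{M}\pmod 2$ and Theorem~\ref{nonorient-genernal}. (Your cautionary remark about combining like terms is harmless but unnecessary: since $f$ is $\mathbb{Z}$-linear, collecting terms in $\mathbb{Z}[x_{ij}]$ before or after applying $f$ gives the same result.)
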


\begin{proof}
By the Leibniz formula:
\[
\det(\mathbf{A_{\sigma}^s}[X]) = \sum_{\pi \in S_X} \operatorname{sgn}(\pi) \prod_{i \in X} \mathbf{A_{\sigma}^s}_{i,\pi(i)},
\]
where $S_X$ denotes the symmetric group on $X$.
Applying the reduction map $f$ to each term:
\[
f\left(\operatorname{sgn}(\pi) \prod_{i \in X} \mathbf{A_{\sigma}^s}_{i,\pi(i)}\right) = \operatorname{sgn}(\pi)  \left(\prod_{i \in X} \mathbf{A_{\sigma}^u}_{i,\pi(i)}\right)  x_X.
\]
Summing over all $\pi \in S_X$ yields:
\[
f(\det(\mathbf{A_{\sigma}^s}[X])) = \det(\mathbf{A_{\sigma}^u}[X]) \cdot x_X.
\]
Taking modulo 2:
\[
f(\det(\mathbf{A_{\sigma}^s}[X])) \bmod 2 = \left(\det(\mathbf{A_{\sigma}^u}[X]) \bmod 2\right) \cdot x_X.
\]
Since $\det(\mathbf{A_{\sigma}^u}[X]) \equiv \det(\mathbf{M}[X]) \pmod{2}$, and by Theorem~\ref{nonorient-genernal}, it follows that
\[
f(\det(\mathbf{A_{\sigma}^s}[X])) \bmod 2 =
\begin{cases}
x_X, & \text{if $X$ is the edge set of a spanning quasi-tree of $B$}, \\
0, & \text{otherwise}.
\end{cases}
\]
\end{proof}

\noindent\textbf{Proof of Matrix Quasi-tree Theorem (Theorem~\ref{thm:main}).}
By the Leibniz formula, $\det(I_n+\mathbf{A_{\sigma}^s})$ expands as a sum over all principal submatrices:
\[
\det(I_n+\mathbf{A_{\sigma}^s}) = \sum_{X \subseteq [n]} \det(\mathbf{A_{\sigma}^s}[X]),
\]
where $\mathbf{A_{\sigma}^s}[X]$ denotes the principal submatrix indexed by $X$ and $\det(\mathbf{A_{\sigma}^s}[\emptyset]) = 1$.

Since the map $f$ is linear, it commutes with summation:
\[
f\left(\det(I_n+\mathbf{A_{\sigma}^s})\right) = f\left(\sum_{X \subseteq [n]} \det(\mathbf{A_{\sigma}^s}[X])\right) = \sum_{X \subseteq [n]} f\left(\det(\mathbf{A_{\sigma}^s}[X])\right).
\]
Reducing both sides modulo 2 yields:
\[
f\left(\det(I_n+\mathbf{A_{\sigma}^s})\right) \bmod 2 = \sum_{X \subseteq [n]} \left( f\left(\det(\mathbf{A_{\sigma}^s}[X])\right) \bmod 2 \right).
\]

By Lemma \ref{thm:bouquet-det}, $f\left(\det(\mathbf{A_{\sigma}^s}[X])\right) \bmod 2$ is nonzero (and equal to $x_X$) if and only if $X$ is the edge set of a spanning quasi-tree. Consequently, in the sum above, the coefficient of $x_X$ is 1 precisely for such subsets $X$, and 0 otherwise.

The number of spanning quasi-trees, $\tau(B)$, is the number of subsets $X$ for which the coefficient of $x_X$ is 1. This is obtained by evaluating the polynomial at $x_X = 1$ for all $X \subseteq [n]$, which sums these coefficients:
\[
\tau(B) = f(\det(I_n + \mathbf{A_{\sigma}^s})) \bmod 2 \big|_{x_X = 1}.
\]
\hfill\boxed{}

\begin{remark}
The polynomial $f(\det(I_n + \mathbf{A_{\sigma}^s}))$ is invariant under the choice of signed rotation $\sigma$, depending only on the bouquet $B$. This follows from Theorem~\ref{thm:main}, since the set of spanning quasi-trees of $B$ is independent of the specific signed rotation representation.
\end{remark}

\begin{corollary}\label{Cor1}
Let $G$ be a connected ribbon graph with edge set $[n]$, $T$ be the edge set of any spanning quasi-tree of $G$, and $\sigma$ be a signed rotation of $G^{\delta(T)}$. Let $\mathbf{A_{\sigma}^s}$ be the symbolic skew-adjacency matrix of $G^{\delta(T)}$ with respect to $\sigma$. Then
\[
f(\det(I_n + \mathbf{A_{\sigma}^s})) \bmod 2 = \sum_X x_X,
\]
where the sum is taken over all subsets $X \subseteq [n]$ such that $X \Delta T$ is the edge set of a spanning quasi-tree of $G$.
Consequently, the number of spanning quasi-trees is
\[
\tau(G) = f(\det(I_n + \mathbf{A_{\sigma}^s})) \bmod 2 \big|_{x_X = 1}.
\]
\end{corollary}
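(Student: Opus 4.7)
The plan is to reduce Corollary~\ref{Cor1} to Theorem~\ref{thm:main} via the bijection between spanning quasi-trees of $G$ and spanning quasi-trees of $G^{\delta(T)}$ induced by partial duality at the delta-matroid level. First I would use the observation recorded just before Section~2 of the paper: because $T$ is the edge set of a spanning quasi-tree of $G$, the partial dual $B := G^{\delta(T)}$ has a single vertex, and is therefore a bouquet. Applying Theorem~\ref{thm:main} to $B$ with the chosen signed rotation $\sigma$ and its symbolic skew-adjacency matrix $\mathbf{A_{\sigma}^s}$ gives
\[
f(\det(I_n + \mathbf{A_{\sigma}^s})) \bmod 2 = \sum_{Y \in \mathcal{F}(B)} x_Y,
\]
where the sum ranges over edge sets of spanning quasi-trees of $B$.

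Next I would translate $\mathcal{F}(B) = \mathcal{F}(G^{\delta(T)})$ back to data about $G$. By Proposition~\ref{prop:tau-and-twist}, $D(G) \ast T = D(G^{\delta(T)})$, so by the definition of twisting,
\[
\mathcal{F}(G^{\delta(T)}) = \{T \Delta F : F \in \mathcal{F}(G)\}.
\]
Setting $X = T \Delta Y$ in the previous display (equivalently, $Y = T \Delta X$, since $\Delta$ is an involution in each argument) reindexes the sum as
\[
f(\det(I_n + \mathbf{A_{\sigma}^s})) \bmod 2 = \sum_{X : X \Delta T \in \mathcal{F}(G)} x_X,
\]
which is the claimed identity. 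For the enumerative consequence, the map $X \mapsto X \Delta T$ is a bijection on $2^{[n]}$, so the number of subsets $X$ with $X \Delta T \in \mathcal{F}(G)$ equals $|\mathcal{F}(G)| = \tau(G)$; substituting $x_X = 1$ for all $X$ therefore yields the stated formula for $\tau(G)$.

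There is no real obstacle: the corollary is a direct bookkeeping consequence of Theorem~\ref{thm:main} combined with Proposition~\ref{prop:tau-and-twist}. The only point worth a moment's care is the reindexing step, where one must confirm that the monomial $x_Y$ attached by Theorem~\ref{thm:main} to a spanning quasi-tree $Y$ of $B$ is correctly matched with the monomial $x_X$, $X = T \Delta Y$, in the target statement; this is immediate from the definition of the reduction map $f$ and the fact that the variable $x_X$ is indexed solely by the underlying subset.
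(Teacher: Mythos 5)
Your proposal is correct and follows essentially the same route as the paper: apply Theorem~\ref{thm:main} to the bouquet $G^{\delta(T)}$, then use Proposition~\ref{prop:tau-and-twist} to identify its spanning quasi-tree edge sets with the sets $X$ satisfying $X \Delta T \in \mathcal{F}(G)$, and conclude the count via the symmetric-difference bijection. No gaps; your explicit reindexing $Y = T \Delta X$ just spells out what the paper states more briefly.
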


\begin{proof}
Since $T$ is the edge set of a spanning quasi-tree of $G$, the partial dual $G^{\delta(T)}$ is a bouquet. By Theorem~\ref{thm:main} applied to $G^{\delta(T)}$, the coefficient of $x_X$ in $f(\det(I_n + \mathbf{A_{\sigma}^s})) \bmod 2$ is $1$ if and only if $X$ is the edge set of a spanning quasi-tree of $G^{\delta(T)}$.

By Proposition~\ref{prop:tau-and-twist}, $D(G^{\delta(T)}) = D(G) * T$, so the edge sets of spanning quasi-trees of $G^{\delta(T)}$ are exactly the sets $F \Delta T$ where $F$ is the edge set of a spanning quasi-tree of $G$. Therefore, $X$ is the edge set of a spanning quasi-tree of $G^{\delta(T)}$ if and only if $X \Delta T$ is the edge set of a spanning quasi-tree of $G$.

By Theorem~\ref{thm:main}, the polynomial evaluation at $x_X = 1$ for all $X \subseteq [n]$ gives the number of spanning quasi-trees of $G^{\delta(T)}$. This number equals $\tau(G)$ via the bijection.
\end{proof}

\section*{Acknowledgements}
This work is supported by NSFC (Nos. 12471326, 12571379), and partially supported
by the the 111 Project (No. D23017), the Excellent Youth Project of Hunan Provincial
Department of Education, P. R. China (No. 23B0117).

\end{document}